\numberwithin{equation}{section}
\newtheorem{thm}{Theorem}
\newtheorem{lem}{Lemma}
\theoremstyle{definition}
\theoremstyle{remark}
\newtheorem{con}{Conjecture}
\begin{document}

\title[A generalization of the Birkhoff Ergodic Theorem]{A theorem of Besicovitch and a generalization of the Birkhoff Ergodic Theorem}
\author[P. Hagelstein]{Paul Hagelstein}
\address{P. H.: Department of Mathematics, Baylor University, One Bear Place \#97328, Waco, Texas 76798-7328}
\email{\href{mailto:paul_hagelstein@baylor.edu}{paul\!\hspace{.018in}\_\,hagelstein@baylor.edu}}
\thanks{P. H. is partially supported by a grant from the Simons Foundation (\#521719 to Paul Hagelstein).}
\author[D. Herden]{Daniel Herden}
\address{D. H.: Department of Mathematics, Baylor University, One Bear Place \#97328, Waco, Texas 76798-7328}
\email{\href{mailto:daniel_herden@baylor.edu}{daniel\!\hspace{.018in}\_\,herden@baylor.edu}}
\author[A. Stokolos]{Alexander Stokolos}
\address{A. S.: Georgia Southern University, Department of Mathematical Sciences,  203 Georgia Avenue,
 Statesboro, Georgia 30460-8093}
\email{\href{mailto:astokolos@georgiasouthern.edu}{astokolos@georgiasouthern.edu}}

\subjclass[2010]{Primary 37A30, 42B25}
\keywords{differentiation of integrals, maximal operators}

\begin{abstract}
A remarkable theorem of Besicovitch is that an integrable function $f$ on $\mathbb{R}^2$ is strongly differentiable if and only if its associated strong maximal function $M_S f$ is finite a.e.  We provide an analogue of Besicovitch's result in the context of ergodic theory that provides a generalization of Birkhoff's Ergodic Theorem.  In particular, we show that if   $f$ is a measurable function on a standard probability space and $T$ is an invertible measure-preserving transformation on that space, then the ergodic averages of $f$ with respect to $T$ converge a.e. if and only if the associated ergodic maximal function $T^*f$ is finite a.e.
\end{abstract}

\maketitle
\
\section{Introduction}

Let $f$ be an integrable function on $\mathbb{R}^2$.  The \emph{Lebesgue Differentiation Theorem} tells us that for a.e. $x \in \mathbb{R}^2$ the averages of $f$ over disks shrinking to $x$ tend to $f(x)$ itself.  More precisely, we have that
$$\lim_{r \rightarrow 0} \frac{1}{|B(x,r)|} \int_{B(x,r)}f = f(x)\;\;\;\textup{a.e.},$$
where $B(x,r)$ denotes the open disk centered at $x$ of radius $r$ and $|B(x,r)|$ denotes the area of that disk.  For a proof of this result, the reader is encouraged to consult \cite{Stein}.

The issue of the averages of $f$ over \emph{rectangles} shrinking to $x$ is more subtle.
 There do exist integrable functions $f$ on $\mathbb{R}^2$ such that, for a.e. $x \in \mathbb{R}^2$, there exists a sequence of rectangles $\{R_{x,j}\}$ shrinking toward $x$ for which
$$\lim_{j \rightarrow \infty} \frac{1}{|R_{x,j}|} \int_{R_{x,j}}f$$
fails to converge. Such functions $f$ can even be characteristic functions of sets!   (See \cite{Gu} for a nice exposition of this result.) This result is closely related to the well-known \emph{Kakeya Needle Problem}, and the interested reader is  encouraged to consult \cite{falconer} for more information on this topic.

If we restrict the class of rectangles that we allow ourselves to average over, we obtain better results.  In \cite{JMZ}, Jessen, Marcinkiewicz, and Zygmund proved that if $\mathcal{B}_2$ consists of all the open rectangles in $\mathbb{R}^2$ \emph{whose sides are parallel to the coordinate axes}, then for any function $f \in L^p(\mathbb{R}^2)$ with $1 < p \leq \infty$ one has
$$\lim_{j \rightarrow \infty}\frac{1}{|R_{j}|}\int_{R_{j}} f \;=\;f(x)\;\;$$
for a.e. $x \in \mathbb{R}^2$, where here $\{R_j\}$ is any sequence of rectangles in $\mathcal{B}_{2}$ shrinking toward $x$.  Such a function $f$ is said to be \emph{strongly differentiable}. Jessen, Marcinkiewicz, and Zygmund proved this by showing that the \emph{strong maximal operator} $M_S$, defined by
$$M_S f(x) = \sup_{x \in R \in \mathcal{B}_2}\frac{1}{|R|} \int_R |f|\;,$$
satisfies for every $1 < p < \infty$ the \emph{weak type} $(p,p)$ estimate
$$\left|\{x \in \mathbb{R}^2 : M_Sf(x) > \alpha\}\right| \leq C_p \left(\frac{\|f\|_{L^p}}{\alpha}\right)^p\;.$$

Most mathematicians interested in multiparameter harmonic analysis are well aware of the above result.   Less well-known is a remarkable theorem that happens to be the paper  in \emph{Fundamenta Mathematicae} immediately preceding the famous paper of Jessen, Marcinkiewicz, and Zygmund.  In this paper \cite{besicovitch}, \emph{On differentiation of Lebesgue double integrals}, Besicovitch proved the following.

\begin{thm}[Besicovitch]\label{thm1}

Let $f$ be an integrable function on $\mathbb{R}^2$.
If $M_S f$ is finite a.e., then for a.e. $x$ we have $$\lim_{j \rightarrow \infty}\frac{1}{|R_j|}\int_{R_j}f = f(x)$$ whenever $\{R_j\}$ is a sequence of sets in $\mathcal{B}_2$ shrinking to $x$.
\end{thm}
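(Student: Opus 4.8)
The plan is to reduce the a.e.\ strong differentiability of $f$ to a statement localized on the level sets of $M_S f$, where the pointwise hypothesis becomes a genuine uniform bound on averages. First I would split $f = f^+ - f^-$; since $M_S f^\pm \le M_S f < \infty$ a.e., it suffices to treat $f \ge 0$. I would also record the relevant ``good class'': every bounded function, being locally in $L^2$, is strongly differentiable a.e. This follows from the weak type $(2,2)$ estimate quoted above together with the density of continuous compactly supported functions in $L^2$ (which are trivially strongly differentiable by uniform continuity), via the standard oscillation bound $\overline{D}g - \underline{D}g \le 2\,M_S(g-\varphi)$ for $\varphi$ continuous, where $\overline{D}g$ and $\underline{D}g$ denote the upper and lower strong derivates of $g$.

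Next I would localize. Set $A_N = \{x : M_S f(x) \le N\}$, so that $\bigl|(\bigcup_N A_N)^c\bigr| = 0$, and it is enough to prove differentiation at a.e.\ point of each $A_N$. The point of this reduction is that if $x_0 \in A_N$ then \emph{every} rectangle $R \in \mathcal{B}_2$ with $x_0 \in R$ satisfies $\frac{1}{|R|}\int_R f \le N$; in particular the averages along any sequence $R_j \to x_0$ are uniformly bounded, and Chebyshev's inequality yields the crucial density bound $|R \cap \{f > \lambda\}| \le (N/\lambda)|R|$.

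With this in hand I would truncate. Write $f = \min(f,\lambda) + h_\lambda$, where $h_\lambda = (f-\lambda)^+$. The truncation $\min(f,\lambda)$ is bounded, hence strongly differentiable a.e.\ with averages tending to $\min(f,\lambda)(x_0)$, and $\min(f,\lambda)(x_0) \to f(x_0)$ as $\lambda \to \infty$ for a.e.\ $x_0$. Thus the theorem reduces to showing that the tail is negligible in the limit: for a.e.\ $x_0 \in A_N$,
$$\lim_{\lambda \to \infty}\ \limsup_{R_j \to x_0}\frac{1}{|R_j|}\int_{R_j} h_\lambda = 0.$$
Since this $\limsup$ is at most $M_S h_\lambda(x_0) \le N$ and these quantities decrease as $\lambda \uparrow \infty$, while $\|h_\lambda\|_{L^1} \to 0$, it would suffice to prove the \emph{restricted} weak type bound
$$\bigl|A_N \cap \{M_S h_\lambda > t\}\bigr| \le \frac{C_N}{t}\,\|h_\lambda\|_{L^1};$$
integrating this over a finite-measure box $A_N \cap Q$ would force $\int_{A_N \cap Q} M_S h_\lambda \to 0$, whence the decreasing limit vanishes a.e.

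The main obstacle is exactly this last estimate. A global weak type $(1,1)$ inequality for $M_S$ is false, so the bound must be earned from the defining constraint of $A_N$. I expect to establish it by a Córdoba--Fefferman-type covering argument: given $x \in A_N \cap \{M_S h_\lambda > t\}$, select a rectangle $R_x \ni x$ with $\frac{1}{|R_x|}\int_{R_x} h_\lambda > t$, and then extract from $\{R_x\}$ a subfamily whose overlap is controlled. Ordinarily the strong basis admits no such selection, but the uniform bound $\frac{1}{|R|}\int_R f \le N$ at the centers $x \in A_N$, together with the density estimate above, should force enough sparsity to close the argument. Making this overlap control precise --- the step where the pointwise finiteness of $M_S f$ genuinely substitutes for an unavailable maximal inequality --- is the heart of the matter.
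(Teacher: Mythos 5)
First, a remark on the comparison itself: the paper does not prove Theorem~\ref{thm1}. It is quoted as Besicovitch's result with a citation to \cite{besicovitch} (and to \cite{Gu} for the de Guzm\'an--Men\'arguez generalization), so there is no in-paper argument to measure your proposal against; I am judging it on its own terms.

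Your reductions are correct and well organized: splitting off $f^{\pm}$, the a.e.\ strong differentiability of the bounded truncation $\min(f,\lambda)$ via the Jessen--Marcinkiewicz--Zygmund weak type $(2,2)$ bound, the localization to $A_N=\{M_Sf\le N\}$, the Chebyshev density estimate $|R\cap\{f>\lambda\}|\le (N/\lambda)|R|$, and the observation that the theorem reduces to showing $\lim_{\lambda\to\infty}M_Sh_\lambda=0$ a.e.\ on $A_N$ (your integration argument over $A_N\cap Q$ does correctly convert the claimed restricted weak type bound into this conclusion, using that $M_Sh_\lambda\le N$ on $A_N$ so the $t$-integral is confined to $(0,N]$ and the logarithmic divergence at $t=0$ is harmless). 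The problem is that the entire content of Besicovitch's theorem is concentrated in the one step you do not carry out: the inequality $\bigl|A_N\cap\{M_Sh_\lambda>t\}\bigr|\le C_N t^{-1}\|h_\lambda\|_{L^1}$. As written, you have reduced one nontrivial statement to another nontrivial statement whose truth in this exact form is not evident, and you say yourself that making the covering argument precise is ``the heart of the matter.'' That is a genuine gap, not a detail. Note in particular that a C\'ordoba--Fefferman selection for the interval basis intrinsically produces only exponential-integrability of the overlap function, i.e.\ the $L(1+\log^{+}L)\to L^{1,\infty}$ endpoint, and $\|h_\lambda\|_{L(1+\log^{+}L)}$ need not tend to $0$; your sketch does not explain how the pointwise constraint $\tfrac{1}{|R|}\int_Rf\le N$ at the (not necessarily central, possibly boundary-adjacent) points $x\in A_N\cap R_x$ eliminates that logarithmic loss or yields any overlap control for arbitrarily eccentric rectangles. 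The known proofs (Besicovitch \cite{besicovitch}; de Guzm\'an--Men\'arguez as presented in Chapter~IV of \cite{Gu}) do not pass through a weak type inequality at all, but through a direct geometric/covering argument exploiting the structure of intervals, respectively of centrally symmetric convex bodies. Until you either prove your restricted weak type estimate or replace it with such an argument, the proposal is an outline of a strategy rather than a proof.
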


Of course, if $f \in L^p(\mathbb{R}^2)$ for $1 < p < \infty$, the quantitative weak type $(p,p)$ bound satisfied by $M_S$ implies that $M_S f $ will be finite a.e.  It is for this reason that this paper of Besicovitch has received comparatively little attention.  However, it is worth observing that the above theorem of Besicovitch provides a means for obtaining a.e. differentiability results that bypasses the need for finding quantitative weak type bounds on the associated maximal operator.

 Many results in the study of differentiation of integrals have an analogous result in ergodic theory; for instance the Lebesgue Differentiation Theorem is structurally very similar to that of the \emph{Birkhoff Ergodic Theorem} on integrable functions.  This observation may be found at least as far back as the work of Wiener \cite{wiener1939}.   In that regard, we consider what the companion result of Besicovitch's Theorem might be when replacing the strong maximal operator $M_S$ by an ergodic maximal operator.  We are led immediately to the following theorem.

 \begin{thm}\label{thm2}
 Let $T$ be an invertible measure-preserving transformation on the standard probability space $(X, \Sigma, \mu)$ and let $f$ be a $\mu$-measurable function on that space.  If $T^\ast f(x)$ is finite $\mu$-a.e., where $T^\ast f$ is the ergodic maximal function defined by
 $$T^\ast f(x) = \sup_{n \ge 1} \frac{1}{n} \left|\sum_{k = 0}^{n-1} f(T^k x)\right|\;,$$
 then the limit
 $$\lim_{n \rightarrow \infty} \frac{1}{n}\sum_{k = 0}^{n-1} f(T^k x)$$
 exists $\mu$-a.e.
 \end{thm}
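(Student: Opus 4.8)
The plan is to exploit the fact that finiteness of $T^\ast f$ is exactly the statement that the ergodic sums $S_nf(x) = \sum_{k=0}^{n-1} f(T^kx)$ grow at most linearly: $|S_nf(x)| \le T^\ast f(x)\, n$ for every $n$, with $T^\ast f(x) < \infty$ $\mu$-a.e. Setting $G_N = \{x : T^\ast f(x) \le N\}$, these sets increase to a set of full measure, so it suffices to prove that $A_nf(x) = \tfrac1n S_nf(x)$ converges for $\mu$-a.e. $x \in G_N$, for each $N$ large enough that $\mu(G_N) > 0$. The gain is that on $G_N$ one has the uniform estimate $|S_jf(y)| \le Nj$ for every $y \in G_N$ and every $j \ge 1$; this linear bound, which encodes the cancellation built into the definition of $T^\ast f$, is what will play the role otherwise filled by integrability of $f$ (note that $f$ itself, and $|f|$, need not be integrable).

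The main device will be the induced (first-return) transformation. By the Poincar\'e recurrence theorem almost every $x \in G_N$ returns to $G_N$; let $r(x)$ denote the first-return time, let $T_{G_N}(x) = T^{r(x)}(x)$ be the induced map on $G_N$ (an invertible measure-preserving transformation of $G_N$ equipped with the normalized measure), and let $r_m(x)$ be the $m$-th cumulative return time. I would introduce the induced function $f_{G_N}(x) = S_{r(x)}f(x)$, the sum of $f$ over one excursion. The crucial point is that $f_{G_N} \in L^1(G_N)$: the linear bound gives $|f_{G_N}(x)| \le N\,r(x)$, while Kac's lemma gives $\int_{G_N} r\, d\mu \le \mu(X) = 1$, so $r \in L^1(G_N)$ and hence $f_{G_N} \in L^1(G_N)$. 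Applying Birkhoff's Ergodic Theorem to $f_{G_N}$ and to $r$ in the induced system, and using the telescoping identities $\sum_{j=0}^{m-1} f_{G_N}(T_{G_N}^jx) = S_{r_m(x)}f(x)$ and $\sum_{j=0}^{m-1} r(T_{G_N}^jx) = r_m(x)$, I obtain that the averages along return times,
$$A_{r_m(x)}f(x) = \frac{\tfrac1m \sum_{j=0}^{m-1} f_{G_N}(T_{G_N}^jx)}{\tfrac1m\, r_m(x)},$$
converge $\mu$-a.e. on $G_N$ to a finite limit $L(x)$, the denominator tending to $E[r \mid \mathcal I] \ge 1$, where $\mathcal I$ is the $T_{G_N}$-invariant $\sigma$-algebra.

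It remains to pass from convergence along the return times $r_m(x)$ to convergence of the full sequence. For $r_m \le n < r_{m+1}$ the difference $S_nf(x) - S_{r_m}f(x)$ is an initial excursion sum $S_{n-r_m}f(T^{r_m}x)$ based at $T^{r_m}x \in G_N$, so the linear bound again gives $|S_nf(x) - S_{r_m}f(x)| \le N\bigl(r_{m+1}(x) - r_m(x)\bigr)$. Since Birkhoff applied to $r$ gives $r_m(x)/m \to E[r \mid \mathcal I](x) \in [1,\infty)$, one has $(r_{m+1}-r_m)/r_m \to 0$ and $r_m/r_{m+1} \to 1$ almost everywhere; inserting these into the elementary estimate
$$\Bigl| A_nf(x) - A_{r_m}f(x) \Bigr| \le |A_{r_m}f(x)|\,\frac{n - r_m}{n} + \frac{N(r_{m+1} - r_m)}{r_m}$$
shows that $A_nf(x) - A_{r_m(x)}f(x) \to 0$, whence $A_nf(x) \to L(x)$ for $\mu$-a.e. $x \in G_N$. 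Letting $N \to \infty$ completes the argument. I expect the main obstacle to be the integrability of the induced function $f_{G_N}$: this is the single step that genuinely uses finiteness of $T^\ast f$ rather than of $T^\ast |f|$, and it rests on marrying the linear growth bound to Kac's lemma. The filling-in between return times is then a soft consequence of the same linear bound together with the regularity of return-time growth supplied by Birkhoff's theorem applied to $r$.
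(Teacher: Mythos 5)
Your proof is correct, and its core engine is the same as the paper's: on a sublevel set of $T^\ast f$ you build the induced (first-return) transformation, use Kac's lemma to put the return time in $L^1$, use the linear bound $|S_jf(y)|\le Nj$ on that set to put the induced function in $L^1$, apply Birkhoff in the induced system, and interpolate between consecutive return times using the same linear bound. Where you genuinely diverge is in how the local statement is globalized. The paper first proves the result for \emph{ergodic} $T$ (Lemma~\ref{l1}): it takes a single positive-measure set $B$ on which $T^\ast f\le M$, shows $f_n/n$ converges a.e.\ on $B$ to the constant $c\mu(B)$, and then invokes ergodicity --- the convergence set is $T$-invariant and has positive measure, hence full measure. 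The general case is then deduced in Section~3 from the Ergodic Decomposition Theorem, which is exactly where the hypotheses of invertibility and standardness of $(X,\Sigma,\mu)$ are consumed. You instead run the induced-map argument without any ergodicity assumption, using the conditional-expectation form of Birkhoff's theorem (so the limit is $E[f_{G_N}\mid\mathcal I]/E[r\mid\mathcal I]$ rather than a constant), and you globalize by exhausting $X$ with the increasing sublevel sets $G_N=\{T^\ast f\le N\}$, whose union has full measure. This buys you a self-contained proof that never touches the ergodic decomposition and therefore does not need the standardness of the probability space; invertibility is still used (to make $T_{G_N}$ measure-preserving and to get the disjointness in Kac's inequality $\int_{G_N}r\,d\mu\le 1$), though conservativity would suffice. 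The paper's route, in exchange, isolates a clean ergodic-case lemma with a constant limit and keeps the non-ergodic bookkeeping out of the analytic argument. One small point worth making explicit in a write-up: a.e.\ $x\in G_N$ returns to $G_N$ \emph{infinitely often} (Poincar\'e recurrence iterated), so that $r_m(x)$ is defined for all $m$ and $r_m(x)\ge m\to\infty$; you use this tacitly when passing to the subsequence $n=r_m(x)$.
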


We remark that if $f$ is integrable, then by the Birkhoff Ergodic Theorem the above limit automatically holds a.e.
\\

The purpose of this paper is to provide a proof of the above theorem.  Section 2 provides a proof of this theorem in the special case that $T$ is an ergodic transformation.  Section 3 provides a proof of the general case by means of the ergodic decomposition theorem.   In the last section we indicate further directions of research, both in ergodic theory as well as the theory of differentiation of integrals.
\\

We remark that our techniques in Section 2 are strongly influenced by the work of Aaronson.  In fact, the key lemma of this section is stated without proof in \cite{aaronson} and
as Exercise 2.3.1 in
\cite{aaronsonbook}. For completeness, we  provide a proof, especially as it may be beneficial for harmonic analysts reading the paper without an extensive background in ergodic theory.
\\

It is our pleasure to thank Jon Aaronson for his helpful comments and advice regarding this paper.

\section{Maximal Functions Associated to Ergodic Transformations}

The purpose of this section is to state and prove the following lemma.

\begin{lem}\label{l1}

Let $T$ be an ergodic measure-preserving transformation on the probability space $(X, \Sigma, \mu)$ and let $f$ be a $\mu$-measurable function on that space.  If $T^\ast f(x)$ is finite $\mu$-a.e., where $T^\ast f$ is the ergodic maximal function defined by
 $$T^\ast f(x) = \sup_{n \ge 1} \frac{1}{n} \left|\sum_{k = 0}^{n-1} f(T^k x)\right|\;,$$
 then the limit
 $$\lim_{n \rightarrow \infty} \frac{1}{n}\sum_{k = 0}^{n-1} f(T^k x)$$
 exists $\mu$-a.e.
\end{lem}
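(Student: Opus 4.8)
The plan is to treat the lemma as a statement purely about the two functions $\overline{\Lambda}(x)=\limsup_{n}\frac1n\sum_{k=0}^{n-1}f(T^kx)$ and $\underline{\Lambda}(x)=\liminf_{n}\frac1n\sum_{k=0}^{n-1}f(T^kx)$, and to show they are finite and coincide a.e. The only analytic input I need beyond the Birkhoff theorem is a divergence form of the ergodic theorem for nonnegative functions: for measurable $h\ge 0$ and ergodic $T$, truncating at height $M$, applying Birkhoff to the bounded (hence integrable) function $\min(h,M)$, and then letting $M\uparrow\infty$ via monotone convergence shows that $\frac1n\sum_{k=0}^{n-1}h(T^kx)\to\int h\,d\mu$ a.e., the limit being taken in $[0,\infty]$ and equal to $+\infty$ precisely when $h\notin L^1(\mu)$.

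Next I would set up the reduction. Writing $A_nf=\frac1n\sum_{k=0}^{n-1}f\circ T^k$, a direct computation gives $A_nf(Tx)=\tfrac{n+1}{n}A_{n+1}f(x)-\tfrac1n f(x)$. Since the hypothesis $T^\ast f<\infty$ a.e. is exactly the statement that $|A_nf(x)|\le T^\ast f(x)<\infty$ for all $n$, the terms $\tfrac1n f(x)$ and $\bigl(\tfrac{n+1}{n}-1\bigr)A_{n+1}f(x)$ both vanish in the limit, so $\overline{\Lambda}$ and $\underline{\Lambda}$ are $T$-invariant and therefore constant a.e. by ergodicity; the same bound forces both constants to be finite. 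Thus everything reduces to proving $\overline{\Lambda}=\underline{\Lambda}$. Decomposing $f=f^+-f^-$ and applying the divergence form to $f^\pm$, I dispatch two cases at once: if both $f^\pm\in L^1$ then $A_nf\to\int f\,d\mu$ by Birkhoff and we are done; if exactly one is integrable then $A_nf\to\pm\infty$ a.e., whence $T^\ast f=\infty$ a.e., contradicting the hypothesis, so that case cannot occur.

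The remaining case, in which $f^+$ and $f^-$ are \emph{both} non-integrable, is where the real work lies and is the main obstacle. Here the naive estimate $|A_ng_N|\le A_n|g_N|$ for the tail $g_N=f\,\mathbf 1_{\{|f|>N\}}$ is useless, since $|g_N|\notin L^1$ for every $N$ and the right-hand side diverges to $+\infty$; the boundedness of $A_nf$ is a pure cancellation phenomenon and must be exploited as such. My plan is to argue by contradiction: supposing $\underline{\Lambda}<a<b<\overline{\Lambda}$ on the whole space, I would run the classical upcrossing argument underlying Birkhoff's theorem, applying Hopf's maximal ergodic theorem to $f-b$ and to $a-f$ on the (invariant, full-measure) crossing set $E$ to force the incompatible bounds $\int_E f\ge b\,\mu(E)$ and $\int_E f\le a\,\mu(E)$, which are impossible when $a<b$ and $\mu(E)>0$.

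The difficulty is that Hopf's inequality, and the very meaning of $\int_E f$, presume integrability, which is unavailable in this case. The crux of the proof is therefore to substitute the a.e. finiteness of $T^\ast f$ for a priori integrability: one must use a stopping-time and truncation scheme that controls the non-integrable part of $f$ on the relevant level sets so that the maximal ergodic inequality can be applied to integrable approximants and then passed to the limit. This conversion of pointwise finiteness of the maximal function into usable integral control is exactly the delicate step I expect to be the bottleneck, and it is here that Aaronson's technique, cited in the paper, should carry the argument.
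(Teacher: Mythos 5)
Your preliminary reductions are sound: the identity $A_nf(Tx)=\tfrac{n+1}{n}A_{n+1}f(x)-\tfrac1n f(x)$ does show that $\overline{\Lambda}$ and $\underline{\Lambda}$ are $T$-invariant, hence constant by ergodicity, and finite since both are dominated by $T^\ast f$; the two easy cases ($f^\pm$ both integrable, or exactly one integrable) are dispatched correctly. But the proof has a genuine gap exactly where you say the real work lies: in the case where $f^+$ and $f^-$ are both non-integrable you describe the obstacle (Hopf's inequality and the upcrossing argument presuppose integrability) and then state that ``a stopping-time and truncation scheme'' should overcome it, without producing one. That is the entire content of the lemma --- the conversion of pointwise finiteness of $T^\ast f$ into usable integral control is not a routine technicality to be deferred, and as written the argument does not close.

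The device the paper uses to effect that conversion, and which your proposal is missing, is the induced (Kakutani) transformation. Since $T^\ast f<\infty$ a.e., one picks $M$ and a set $B$ of positive measure on which $T^\ast f\le M$, so that $|f_n|\le Mn$ everywhere on $B$. Letting $\phi$ be the first-return time to $B$ and $T_B=T^{\phi(\cdot)}$ the induced map, the return-block sums $g(x)=f_{\phi(x)}(x)$ satisfy $|g|\le M\phi$ on $B$, and Kac's formula $\int_B\phi\,d\mu=1$ then makes $g$ \emph{integrable} on $(B,\mu_B)$ even though $f$ itself need not be integrable anywhere. Birkhoff's theorem applied to $\phi$ and to $g$ on the induced system, together with the bound $|f_{l_n}(T_B^{k_n}x)|\le Ml_n$ for the incomplete final block, yields convergence of $f_n/n$ a.e.\ on $B$; since the convergence set is $T$-invariant and has positive measure, ergodicity finishes the proof. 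No case analysis on the integrability of $f^\pm$ is needed. If you want to complete your own route instead, you would have to supply the truncation scheme explicitly; the induced-transformation argument is the standard (and, per Aaronson, the intended) way to do it.
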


\begin{proof}




Since $T^\ast f(x) < \infty$ $\mu$-a.e., we can choose some $M\ge 0$ and  a set $B \in \Sigma$ such that $\mu(B)>0$ 
with $T^\ast f(x) \le M$ for all $x \in B$. Letting $$f_n(x) = \sum_{k=0}^{n-1}f(T^kx)\;,$$
we have
\begin{equation}\label{e0}
\frac{|f_n(x)|}{n}\le M\;\; \mbox{for all}\;\; x \in B\;\; \mbox{and}\;\; n\ge 1.
\end{equation}

The goal now is to show that $\frac{f_n(x)}{n}$ converges to a finite constant $\mu$-a.e. on $B$,
in which case the $T$-invariant $\mu$-measurable set
$$\left\{x\in X : \frac{f_n(x)}{n}\;\; \mbox{converges for}\;\; n \rightarrow \infty\right\}$$
contains a subset of positive measure. As $T$ is ergodic, this would complete the proof.\\

The integer $$\phi(x) = \inf\{n \geq 1 : T^nx \in B\}$$ is defined for $\mu$-a.e. $x \in B$.
Let $T_B$ be the induced transformation of $T$ on $B$ given by
$$T_B(x) = T^{\phi(x)}x\;.$$  Note $T_B$ is an ergodic measure-preserving transformation on \mbox{$(B, \Sigma \cap B, \mu)$} and $\int_B \phi\; d\mu = 1$. (See \cite{kac1947, kakutani1943, petersen} in this regard.)

Define now the function $g$ on $B$ by
$$g(x) = f_{\phi(x)}(x)\;.$$
Moreover define the functions $\phi_n, g_n$ on $B$ by
$$\phi_n(x) = \sum_{k=0}^{n-1}\phi(T_B^k x)\;\; \mbox{and}\;\; g_n(x) = \sum_{k=0}^{n-1}g(T_B^k x)\;.$$
Introducing the measure $\mu_B= \mu/ \mu(B)$, we have the probability space $(B, \Sigma \cap B, \mu_B)$,
and applying the Birkhoff Ergodic Theorem to the integrable function $\phi$, we have
\begin{equation}\label{e1}
\lim_{n \rightarrow \infty}\frac{\phi_n(x)}{n} = \int_B \phi \;d\mu_B= \frac 1{\mu(B)} \int_B \phi \;d\mu=\frac 1{\mu(B)}\;\;\mu\textup{-a.e. on}\; B\;.
\end{equation}
In particular, $\mu$-a.e. on $B$ holds
\begin{equation}\label{e2}
\frac{\phi(T_B^n x)}{n} = \frac{\phi_{n+1}(x)-\phi_n(x)}{n} = \frac{n+1}n \cdot \frac{\phi_{n+1}(x)}{n+1} -  \frac{\phi_{n}(x)}{n}\rightarrow 0\;.
\end{equation}
Note that $g$ is a measurable function with $|g(x)| \leq M \phi(x)$ on $B$. Thus $\int_B |g| \;d\mu\le M< \infty$, and $g$ is integrable.
Hence by the Birkhoff Ergodic Theorem there exists a finite constant $c$ such that
\begin{equation}\label{e3}
\lim_{n \rightarrow \infty}\frac{g_n(x)}{n} = c \;\;\mu\textup{-a.e. on}\;\; B\;.
\end{equation}

For the remainder of this proof, fix any $x \in B$ such that (\ref{e2}) and (\ref{e3}) hold. For $n\ge 0$, we define the integers $k_n=k_n(x)$ and $l_n=l_n(x)$ such that
$$n = \phi_{k_n}(x) + l_n\;\;\textup{with}\;\;0 \leq l_n < \phi(T_B^{k_n}x)\;.$$
Note that $k_n$ and $l_n$ are uniquely determined with $k_n \rightarrow \infty$ for $n \rightarrow \infty$.
With \eqref{e2}, we have the estimate
$$0 \le \frac{l_n}{k_n} \le \frac{\phi(T_B^{k_n}x)}{k_n} \rightarrow 0\;\; \mbox{for}\;\; n\rightarrow \infty\;.$$
Hence
\begin{equation}\label{e4}
\lim_{n \rightarrow \infty}\frac{l_n}{k_n} = 0\;.
\end{equation}
With \eqref{e1}, we have
\begin{equation}\label{e5}
\lim_{n \rightarrow \infty}\frac{n}{k_n} = \lim_{n \rightarrow \infty}\frac{\phi_{k_n}(x)}{k_n}+\lim_{n \rightarrow \infty}\frac{l_n}{k_n}= \lim_{n \rightarrow \infty}\frac{\phi_{n}(x)}{n}=\frac 1{\mu(B)}\;.
\end{equation}
Observe that
$$f_n(x) = f_{\phi_{k_n}(x)}(x) + f_{l_n}(T_B^{k_n}x) = g_{k_n}(x) + f_{l_n}(T_B^{k_n}x)\;.$$
With \eqref{e0}, \eqref{e4}, and \eqref{e5}, we have the estimate
$$0 \le \frac{|f_{l_n}(T_B^{k_n}x)|}{n} \le \frac{M l_n}{n} = M\cdot \frac{k_n}n \cdot \frac{l_n}{k_n} \rightarrow 0\;\; \mbox{for}\;\; n\rightarrow \infty\;.$$
Together with \eqref{e3} and \eqref{e5}, we conclude
\begin{align}
\lim_{n \rightarrow \infty}\frac{f_n(x)}{n} &= \lim_{n \rightarrow \infty}\frac{g_{k_n}(x)}{n} + \lim_{n \rightarrow \infty}\frac{f_{l_n}(T_B^{k_n}x)}{n} \notag
\\ &= \lim_{n \rightarrow \infty}\frac{g_{k_n}(x)}{n}= \lim_{n \rightarrow \infty}\frac{g_{k_n}(x)}{k_n} \frac{k_n}n= c\mu(B)\;.\qedhere \notag
\end{align}
As this holds for all $x$ satisfying (\ref{e2}) and (\ref{e3}), and both of these hold for $\mu$-a.e. $x$ on $B$, the theorem follows.

\end{proof}

\section{Maximal Functions Associated to Measure-Preserving Transformations}
The purpose of this section is to provide a proof of  Theorem \ref{thm2} by using Lemma \ref{l1} combined with the Ergodic Decomposition Theorem.

The version of the Ergodic Decomposition Theorem we use follows from Theorem 2.2.9 in \cite{aaronsonbook} and is stated as follows:

\begin{thm}[Ergodic Decomposition Theorem]
Let $T$ be an invertible measure-preserving transformation on a standard probability space $(X, \Sigma, \mu)$.   Then there is a probability space $(Y, \Lambda, \eta)$ and a collection of probabilities
$$\{\mu_y : y \in Y\}$$
on $(X, \Sigma)$ such that
\begin{itemize}
\item[(i)] for $y \in Y$, $T$ is an invertible measure-preserving ergodic transformation of $(X, \Sigma, \mu_y)$, and
\item[(ii)] for $A \in \Sigma$, the map $y \mapsto \mu_y(A)$ is measurable, with
$$\mu(A) = \int_Y \mu_y(A)d\eta(y)\;.$$
\end{itemize}
\end{thm}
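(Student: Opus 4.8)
The plan is to realize the ergodic components as the conditional measures obtained by disintegrating $\mu$ over the $\sigma$-algebra of invariant sets, and then to verify invariance and ergodicity of those conditional measures. Write $\mathcal{I} = \{A \in \Sigma : T^{-1}A = A\}$ for the $\sigma$-algebra of $T$-invariant sets and, for $f \in L^1(\mu)$, let $\bar f = E(f \mid \mathcal{I})$ denote conditional expectation. Since $(X, \Sigma, \mu)$ is standard, Rokhlin's disintegration theorem applies to $\mu$ relative to $\mathcal{I}$.

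First I would fix a countable algebra $\mathcal{F} = \{f_i\}$ of bounded measurable functions that generates $\Sigma$ and is therefore dense in $L^1(\nu)$ for every probability measure $\nu$ on $(X, \Sigma)$. Rokhlin's theorem then yields probability measures $\{\mu_x : x \in X\}$, each concentrated on the $\mathcal{I}$-atom through $x$, with $x \mapsto \mu_x(A)$ being $\mathcal{I}$-measurable and $\int_X \mu_x(A)\, d\mu(x) = \mu(A)$ for every $A \in \Sigma$; in addition, any $\mathcal{I}$-measurable $h$ satisfies $h = h(x)$ $\mu_x$-a.e.\ and hence $\int h\, d\mu_x = h(x)$. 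Taking $Y$ to be the factor space associated to $\mathcal{I}$, with $\eta$ the image of $\mu$ under the factor map $x \mapsto y(x)$ and $\mu_{y(x)} := \mu_x$, produces the data $(Y, \Lambda, \eta)$ and $\{\mu_y\}$ and establishes property (ii).

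For invariance I would observe that, since $T^{-1}A = A$ for every $A \in \mathcal{I}$ and $T_*\mu = \mu$, the family $\{T_*\mu_x\}$ is also a disintegration of $\mu$ over $\mathcal{I}$; the essential uniqueness of the disintegration then forces $T_*\mu_x = \mu_x$ for $\mu$-a.e.\ $x$. Thus $T$ preserves each $\mu_x$, and since the original $T$ is invertible the same map furnishes an invertible measure-preserving transformation of $(X, \Sigma, \mu_x)$, giving the measure-preserving part of (i).

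The main obstacle is ergodicity of almost every $\mu_y$, since a priori a set can be $T$-invariant only modulo a $\mu_x$-null set rather than lie in $\mathcal{I}$. To handle this I would invoke the Birkhoff Ergodic Theorem twice. In $(X, \Sigma, \mu)$ it gives, for each $f_i \in \mathcal{F}$, the $\mu$-a.e.\ convergence $\frac{1}{n}\sum_{k=0}^{n-1} f_i(T^k x) \to \bar f_i(x)$. Transferring this through the disintegration by a Fubini argument, and using that $\bar f_i$ equals the constant $\bar f_i(x) = \int f_i\, d\mu_x$ $\mu_x$-a.e., I would obtain for $\mu$-a.e.\ $x$ that the averages of every $f_i$ converge $\mu_x$-a.e.\ to $\int f_i\, d\mu_x$. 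A second application of Birkhoff's theorem inside $(X, \Sigma, \mu_x)$ identifies these constant limits with conditional expectations over the $\mu_x$-invariant $\sigma$-algebra; since $\mathcal{F}$ is dense in $L^1(\mu_x)$, that $\sigma$-algebra is $\mu_x$-trivial, i.e.\ $\mu_x$ is ergodic. The delicate points are the simultaneous control of the countably many $\mu$-null exceptional sets so that one full-measure set of $x$ serves all $f_i$ at once, and the passage, via density, from constancy on $\mathcal{F}$ to genuine ergodicity; collecting these conclusions gives (i) and completes the proof.
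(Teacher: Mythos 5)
There is no internal proof to compare against here: the paper does not prove the Ergodic Decomposition Theorem, but imports it as a black box, noting only that this version ``follows from Theorem 2.2.9 in \cite{aaronsonbook},'' and then uses it in the short proof of Theorem \ref{thm2}. Your sketch therefore supplies what the paper deliberately omits, and it is the standard argument, correct in outline: Rokhlin disintegration of $\mu$ over the invariant $\sigma$-algebra $\mathcal{I}$ (legitimate because the space is standard), $T$-invariance of a.e.\ fiber measure via essential uniqueness of the disintegration, and ergodicity of a.e.\ fiber by applying Birkhoff twice over a countable dense algebra $\mathcal{F}$ and using density in $L^1(\mu_x)$ to upgrade ``averages of each $f_i$ converge to the constant $\int f_i\,d\mu_x$'' to triviality of the $\mu_x$-invariant $\sigma$-algebra; your explicit handling of the genuinely delicate point --- that a set can be invariant $\mu_x$-a.e.\ without lying in $\mathcal{I}$ --- is exactly right. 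Two technical points deserve more care than your sketch gives them: first, $\mathcal{I}$ need not be countably generated, so ``the $\mathcal{I}$-atom through $x$'' and the identity $\int h\,d\mu_x = h(x)$ only make sense after replacing $\mathcal{I}$ by a countably generated $\sigma$-algebra agreeing with it modulo $\mu$-null sets (a standard fix on a standard space, but one that must be stated, since the atoms of the replacement are not literally the $T$-invariant equivalence classes); second, in the uniqueness step you should verify that $x \mapsto T_*\mu_x$ really is a disintegration over the same $\sigma$-algebra, i.e.\ that it retains the correct measurability and satisfies $\int h\,d(T_*\mu_x) = h(x)$ a.e.\ for $\mathcal{I}$-measurable $h$, which requires $h \circ T = h$ at least mod $\mu$ --- immediate for strictly invariant $h$ but needing a word once you have passed to the mod-$0$ version of $\mathcal{I}$. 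As a comparison of what each route buys: the paper's citation keeps its exposition minimal and matches the precise form needed (invertible $T$, standard space) for the remark following Theorem \ref{thm2}; your self-contained proof makes visible that invertibility plays no role in the decomposition itself --- it is only carried along so that each $\mu_y$-system is invertible, consistent with the paper's remark that the conclusion of Theorem \ref{thm2} holds whenever such a decomposition exists.
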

\begin{proof}[Proof of Theorem~\ref{thm2}]
Let $f$ be a $\mu$-measurable function on $X$ and suppose that $T^\ast f (x)$ is finite for $\mu$-a.e. $x$.   Let $A$ be the set of points in $X$ such that
$$\lim_{n \rightarrow \infty} \frac{1}{n}\sum_{k = 0}^{n-1} f(T^k x)$$
exists.   Note that $A$ is indeed $\mu$-measurable, being the complement of the set
$$\bigcup_{\alpha, \beta \in \mathbb{Q}} \bigg\{x \in X : \liminf_{n \rightarrow \infty}  \frac{1}{n}\sum_{k = 0}^{n-1} f(T^k x) < \alpha < \beta < \limsup_{n \rightarrow \infty}  \frac{1}{n}\sum_{k = 0}^{n-1} f(T^k x)\bigg\}\,.$$

It suffices to show that $\mu(A) = 1$.  If $\mu(A) < 1$, by the ergodic decomposition above there would exist $y \in Y$ such that $\mu_y(A) < 1$.  However, as $T$ is an ergodic transformation on the space $(X, \Sigma, \mu_y)$, by Lemma \ref{l1} we would have $\mu_y(A) = 1$, a contradiction.
\end{proof}

{\it{Remark:}}  The condition of invertibility of $T$ in Theorem \ref{thm2} enables the use of the version of the Ergodic Decomposition Theorem we provide here.  The conclusion of Theorem \ref{thm2} holds whenever $T$ and $(X, \Sigma, \mu)$ permit a decomposition as in the conclusion of the Ergodic Decomposition Theorem.

\section{Future Directions}
The theorem of Besicovitch and its analogue in the context of ergodic theory do suggest the following future directions of research, some related to very recent work of Hagelstein and Parissis \cite{hpfund2018}.
\\

{\bf{Problem:}}   \emph{Differentiation} of a function $f$ relates to averages of $f$ over sets of arbitrarily small diameter, whereas the strong maximal operator $M_S$ involves rectangles of any size.  This suggests that we might be able to strengthen Theorem \ref{thm1} by the following:
\\

Given a collection $\mathcal{B} = \{R_j\}$ of open sets in $\mathbb{R}^n$  we define the maximal operator $M_\mathcal{B}f$ by
$$M_{\mathcal{B}}f(x) = \sup_{x \in R \in \mathcal{B}}\frac{1}{|R|}\int_R |f|\;.$$
For $r > 0$ we set
$$\mathcal{B}_r = \{R \in \mathcal{B} : \textup{diam}\, R < r\}\;.$$
Define the maximal operator $\tilde{M}_\mathcal{B}$ by
$$\tilde{M}_\mathcal{B} f(x) = \lim_{r \rightarrow 0}M_{\mathcal{B}_r}f(x)\;.$$
\begin{con}
If $\mathcal{B}_2$ is the collection of rectangles in $\mathbb{R}^2$ whose sides are parallel  to the coordinate axes and $\tilde{M}_\mathcal{B}f(x)$ is finite a.e., then $f$ is strongly differentiable.
\end{con}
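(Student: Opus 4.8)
The plan is to reduce the conjecture to the theorem of Besicovitch already established as Theorem~\ref{thm1}, by means of a localization argument. Since strong differentiability is a purely local property, I would fix a bounded open box $Q_0$ together with a slightly larger box $Q_0'$, set $\rho_0 = \mathrm{dist}(Q_0,(Q_0')^c) > 0$, and let $\bar f = f\chi_{Q_0'}$. For $x \in Q_0$ and any $R \in \mathcal{B}_2$ with $x \in R$ and $\mathrm{diam}\,R < \rho_0$ one has $R \subseteq Q_0'$, so the small-rectangle averages of $\bar f$ and of $f$ coincide; hence it suffices to prove that $\bar f$ is strongly differentiable a.e.\ on $Q_0$, and for that, by Theorem~\ref{thm1}, it suffices to show $M_S\bar f < \infty$ a.e.\ on $Q_0$. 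The crux is that the hypothesis controls only small rectangles whereas $M_S$ ranges over all scales, so I would split the supremum defining $M_S\bar f(x)$ according to whether $\mathrm{diam}\,R < r$ or $\mathrm{diam}\,R \ge r$ for a suitable scale $r$.

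The large-diameter rectangles should be tamed by a one-parameter (Fubini) estimate, which I expect to be the decisive observation. Writing $R = I_1 \times I_2$ with $|I_1| = a$, $|I_2| = b$, and introducing the marginals $F(t) = \int_{\mathbb{R}}|\bar f(s,t)|\,ds$ and $G(s) = \int_{\mathbb{R}}|\bar f(s,t)|\,dt$, Tonelli's theorem gives $\frac{1}{|R|}\int_R|\bar f| \le \frac1a M_1F(x_2)$ and $\frac{1}{|R|}\int_R|\bar f| \le \frac1b M_1G(x_1)$, where $M_1$ is the one-dimensional Hardy--Littlewood maximal operator. Since $\mathrm{diam}\,R \ge r$ forces $\max(a,b) \ge r/\sqrt2$, every such rectangle satisfies $\frac{1}{|R|}\int_R|\bar f| \le \frac{\sqrt2}{r}\big(M_1F(x_2) + M_1G(x_1)\big)$. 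As $\bar f \in L^1(\mathbb{R}^2)$, both $F$ and $G$ lie in $L^1(\mathbb{R})$, so $M_1F$ and $M_1G$ are finite a.e.; by Fubini the right-hand side is finite for a.e.\ $x = (x_1,x_2)$. Thus large rectangles never produce an infinite value, and the genuine multi-scale difficulty is confined entirely to the small rectangles.

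The small rectangles should then be absorbed by the hypothesis. Because $\tilde{M}_{\mathcal{B}}f(x) = \inf_{r>0}M_{\mathcal{B}_r}f(x)$ is finite a.e., the level-and-scale sets $E_{M,r} = \{x : \sup_{x \in R \in \mathcal{B}_r}\frac{1}{|R|}\int_R|f| \le M\}$, each measurable since $M_{\mathcal{B}_r}$ is lower semicontinuous, cover $Q_0$ up to a null set as $M,r$ range over the positive rationals with $r \le \rho_0$. On $E_{M,r}\cap Q_0$ every $R \ni x$ with $\mathrm{diam}\,R < r$ lies in $Q_0'$ and obeys $\frac{1}{|R|}\int_R|\bar f| = \frac{1}{|R|}\int_R|f| \le M$, while the rectangles with $\mathrm{diam}\,R \ge r$ are controlled by the previous paragraph. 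Hence $M_S\bar f \le \max\big(M, \tfrac{\sqrt2}{r}(M_1F(x_2)+M_1G(x_1))\big) < \infty$ a.e.\ on each $E_{M,r}\cap Q_0$, and therefore a.e.\ on $Q_0$. Theorem~\ref{thm1} then yields strong differentiability of $\bar f$, hence of $f$, a.e.\ on $Q_0$.

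The main obstacle I anticipate is not the maximal estimate but the integrability needed to invoke Theorem~\ref{thm1}, which is stated for $f \in L^1(\mathbb{R}^2)$. A function with $\tilde{M}_{\mathcal{B}}f$ finite a.e.\ need not be even locally integrable near its exceptional null set: for instance $f(x) = |x|^{-2}$ has $\tilde{M}_{\mathcal{B}}f(x) < \infty$ for every $x \ne 0$ yet is not integrable on any neighborhood of the origin. To handle this I would observe that finiteness of $\tilde{M}_{\mathcal{B}}f$ at $x$ forces $\int_Q|f| \le M|Q| < \infty$ for some small square $Q \ni x$, so that $f$ is locally integrable on the open full-measure set $U = \{x : f \in L^1(\text{some neighborhood of }x)\}$. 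One then applies the localization above only to boxes with $\overline{Q_0'} \subseteq U$, on which $\bar f \in L^1(\mathbb{R}^2)$, and exhausts $U$ by countably many such boxes. Since $\mathbb{R}^2 \setminus U$ is null, this establishes strong differentiability a.e.\ and proves the conjecture.
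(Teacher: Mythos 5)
The statement you are proving is posed in the paper as an open \emph{conjecture}: the authors give no proof of it, so there is no argument of theirs to compare yours against. Judged on its own terms, your proposal looks essentially sound, and the decisive step is the one you single out: the marginal (Tonelli) bound $\frac{1}{|R|}\int_R|\bar f|\le \min\bigl(\tfrac{1}{a}M_1F(x_2),\tfrac{1}{b}M_1G(x_1)\bigr)$ for $R=I_1\times I_2$, which disposes of every rectangle of diameter at least $r$ in one stroke (since one side length is at least $r/\sqrt2$) and confines the genuine difficulty to the small scales that the hypothesis controls. The localization to boxes whose slightly enlarged closures lie in the open full-measure set $U$ of local integrability correctly handles the fact that finiteness of $\tilde{M}_{\mathcal{B}}f$ a.e.\ does not force $f\in L^1$, and Theorem \ref{thm1} then applies to $\bar f=f\chi_{Q_0'}$.

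Two points deserve tightening. First, Theorem \ref{thm1} as stated requires $M_S\bar f<\infty$ a.e.\ on all of $\mathbb{R}^2$, whereas you only assert finiteness a.e.\ on $Q_0$. This is harmless, because your small-rectangle bound $\frac{1}{|R|}\int_R|\bar f|\le\frac{1}{|R|}\int_R|f|\le M$ uses only the pointwise inequality $|\bar f|\le|f|$ and therefore holds at \emph{every} $x\in E_{M,r}$, not just those in $Q_0$; combined with the large-rectangle bound, which is likewise global, this yields $M_S\bar f<\infty$ a.e.\ on $\mathbb{R}^2$ --- but you should say so explicitly rather than claim that finiteness on $Q_0$ suffices. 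Second, be aware of the scope of the method: it leans entirely on the product structure of $\mathcal{B}_2$ (the Fubini bound) and on the availability of Besicovitch's theorem for this particular basis, so it says nothing about the paper's subsequent, more general conjectures for translation-invariant density bases, where neither ingredient exists. Within the stated scope of this conjecture, however, I see no gap; you should write the argument out in full detail, as it would settle a question the paper leaves open.
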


Of course, this conjecture may be generalized in many ways, e.g., if $\mathcal{B}$ is a translation invariant density basis of open sets in $\mathbb{R}^n$ and $\tilde{M}_\mathcal{B}f(x)$ is finite a.e., then $\mathcal{B}$ differentiates $f$.
\\

{\bf{Problem:}}    Theorem \ref{thm1} was generalized by de Guzm\'an and Men\'arguez to encompass homothecy invariant Busemann-Feller bases associated to convex sets in $\mathbb{R}^n$ with a center of symmetry.   (For a proof one may consult Chapter IV of \cite{Gu}.)    It is natural to consider to what extent Theorem \ref{thm1} may be further generalized.   One possible generalization is provided by the following:

\begin{con}
Let $\mathcal{B}$ be a translation invariant density basis consisting of open sets in $\mathbb{R}^n$.  If $M_\mathcal{B}f(x) < \infty$ a.e., then
$$\lim_{j \rightarrow \infty}\frac{1}{|R_{j}|}\int_{R_{j}} f \;=\;f(x)\;\;a.e.$$ where the limit is over an arbitrary sequence of sets $\{R_j\}$ in $\mathcal{B}$ shrinking to $x$.
\end{con}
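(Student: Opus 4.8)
The plan is to follow the classical truncation strategy for differentiation theorems, using the density basis hypothesis to dispose of a bounded ``good'' part and then reducing the whole difficulty to a statement about the tail of $f$. Since $M_\mathcal{B}f = M_\mathcal{B}|f|$, I may assume $f \ge 0$, and since the conclusion is local I may fix a ball of finite measure; the finiteness of $M_\mathcal{B}f$ near a.e. point forces $f$ to be integrable on that ball. For $N > 0$ write $f = g_N + h_N$ with $g_N = f\chi_{\{f \le N\}}$ and $h_N = f\chi_{\{f > N\}}$. Because $\mathcal{B}$ is a density basis it differentiates characteristic functions, hence simple functions, hence every bounded measurable function; thus $\mathcal{B}$ differentiates $g_N$, and the oscillation
$$\Omega f(x) \;=\; \limsup_{\substack{x\in R\in\mathcal{B}\\ \operatorname{diam}R\to 0}}\frac{1}{|R|}\int_R f \;-\; \liminf_{\substack{x\in R\in\mathcal{B}\\ \operatorname{diam}R\to 0}}\frac{1}{|R|}\int_R f$$
satisfies $\Omega f(x) \le \Omega h_N(x) \le 2\,\tilde{M}_\mathcal{B}h_N(x)$ for a.e. $x$ and every $N$. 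The proof therefore reduces to showing that $\tilde{M}_\mathcal{B}h_N(x)\to 0$ for a.e. $x$ as $N\to\infty$; equivalently, that the pointwise-finite maximal operator is ``continuous from above'' along the truncations $h_N\downarrow 0$.

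To attack $\tilde{M}_\mathcal{B}h_N \to 0$, I would try to convert the a.e. finiteness of $M_\mathcal{B}f$ into a quantitative, local weak-type estimate that improves as $N$ grows. Fix $\lambda>0$ and set $E_{N,\lambda} = \{x : \tilde{M}_\mathcal{B}h_N(x) > \lambda\}$. For each such $x$ select a set $R\in\mathcal{B}$ of small diameter with $\frac{1}{|R|}\int_R h_N > \lambda$; the density-basis covering property (equivalently, the finiteness of de Guzm\'an's halo function for $\mathcal{B}$, see Chapter IV of \cite{Gu}) should let me extract from this family a subfamily of controlled overlap covering a fixed proportion of $E_{N,\lambda}$. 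Summing the averages over the subfamily would then bound $|E_{N,\lambda}|$ by a halo-weighted integral of $h_N$, and since $\int_{\textup{ball}} h_N \to 0$, one would hope to conclude $|E_{N,\lambda}|\to 0$, and hence $\tilde{M}_\mathcal{B}h_N \to 0$ a.e. along a subsequence. Here the hypothesis $M_\mathcal{B}f<\infty$ a.e.\ (rather than mere $L^1$ control) is what one would use to localize the selected sets and discard the contribution of large scales.

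The step I expect to be genuinely hard --- and the reason the statement is posed as a conjecture --- is precisely this interchange of the supremum defining the maximal operator with the truncation limit $N\to\infty$. For a general translation-invariant density basis there is no product or iterated structure and no single underlying scaling, so the sets realizing large tail averages at different points need not be comparable, and the covering and overlap estimates furnished by the bare density-basis property appear too weak to force $|E_{N,\lambda}|\to 0$ without an auxiliary quantitative bound on the halo function. In Besicovitch's original setting the axis-parallel rectangles factor through one-dimensional Hardy--Littlewood maximal functions, and this furnishes exactly the missing quantitative control; the analogue of that mechanism is what must be supplied here.

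Finally, I would note the parallel with the ergodic Lemma~\ref{l1}: there the induced transformation $T_B$ on a set where $T^\ast f$ is bounded, together with Kac's formula $\int_B \phi\,d\mu = 1$, plays the role of a stopping time that repackages the partial sums, and ergodicity upgrades positive measure to full measure. A complete proof of the conjecture would seem to require both an analogue of this induced, stopping-time decomposition adapted to the geometry of $\mathcal{B}$ and a substitute for the zero-one law, neither of which is available from translation invariance alone; supplying these is the crux of the problem.
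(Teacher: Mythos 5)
The first thing to say is that the paper does not prove this statement: it appears in Section 4 as an open conjecture, explicitly listed among the ``subjects of ongoing research,'' so there is no proof of record to compare yours against. Your write-up is consistent with that status --- it is a reduction plus a candid admission that the decisive step is missing --- but it is not a proof and should not be read as one.

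On the mathematics, the reduction is sound and standard. A density basis differentiates $L^\infty$, so splitting $f=g_N+h_N$ at height $N$ correctly localizes the difficulty to showing $\tilde{M}_\mathcal{B}h_N\to 0$ a.e.; and the hypothesis $M_\mathcal{B}f<\infty$ a.e. does give integrability of $f$ on every basis set containing a point of finiteness, hence local integrability on a full-measure open set, so the averages in question are defined. The genuine gap is exactly where you place it: passing from a.e. finiteness of $M_\mathcal{B}f$ to a bound on $|\{\tilde{M}_\mathcal{B}h_N>\lambda\}|$ that tends to $0$ with $N$. The Busemann--Feller characterization of density bases yields weak-type control only on characteristic functions (equivalently, finiteness of de Guzm\'an's halo function at each level), and summing that estimate over the level sets of $h_N$ loses a factor that the bare density-basis property cannot control; this is precisely the halo problem, and it is why Besicovitch's argument for Theorem \ref{thm1}, which exploits the iterated one-dimensional structure of axis-parallel rectangles, does not transfer. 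Your proposal therefore identifies the obstruction correctly but does not overcome it; the statement remains open both in the paper and after your attempt, and any writeup should present your material as a reduction of the conjecture, not a resolution of it.
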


{\bf{Problem:}}
It would be natural to desire to obtain a \emph{multiparameter} analogue of Theorem \ref{thm2} in the spirit of previous work of Hagelstein and Stokolos in \cite{hs2012}  and Hagelstein and Parissis in  \cite{hpnyj2017}.  In particular we make the following conjecture.

\begin{con}
Let $U$ and $V$ be a nonperiodic collection of invertible measure-preserving transformations on a standard probability space $(X, \Sigma, \mu)$ and define the associated strong ergodic maximal operator by
$$M_{U,V}f(x) = \sup_{m,n \ge 1}\frac{1}{mn}\left|\sum_{j=0}^{m-1} \sum_{k=0}^{n-1} f(U^j V^k x)\right|\;.$$
If $M_{U,V}f(x) < \infty$ $\mu$-a.e., then
$$\lim_{m,n \rightarrow \infty} \frac{1}{mn}\sum_{j=0}^{m-1} \sum_{k=0}^{n-1} f(U^j V^k x) \;\;\textup{converges $\mu$-a.e.}$$
\end{con}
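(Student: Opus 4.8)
The plan is to imitate the architecture of Lemma~\ref{l1} and Theorem~\ref{thm2}, upgrading the one-parameter induced-transformation argument to two parameters and replacing the scalar Birkhoff theorem with a multiparameter pointwise ergodic theorem of Dunford--Zygmund type. Reading $U$ and $V$ as commuting maps generating a free $\mathbb{Z}^2$-action, I would first reduce, exactly as at the start of the proof of Lemma~\ref{l1}, to a set $B \in \Sigma$ with $\mu(B) > 0$ and a constant $M \ge 0$ for which $M_{U,V}f(x) \le M$ on $B$, so that
$$\frac{1}{mn}\left|\sum_{j=0}^{m-1}\sum_{k=0}^{n-1} f(U^j V^k x)\right| \le M \qquad \textup{for all } x \in B \textup{ and } m,n \ge 1.$$
Since the set on which the double averages converge is $\mathbb{Z}^2$-invariant, and since by an ergodic decomposition for the $\mathbb{Z}^2$-action it would suffice to treat the ergodic case, the goal reduces to showing that these averages converge on a subset of $B$ of positive measure.

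To exploit the bound on $B$, I would construct a two-parameter analogue of the induced system. Using the nonperiodicity of the action, one expects to obtain (via the multiparameter Rokhlin tower technology of Katznelson--Weiss, or a suitable $\mathbb{Z}^2$ version of Kakutani's induced transformation) a tiling of each orbit by the return regions to $B$, together with return-time data playing the role of $\phi$. The natural candidate for the integrable surrogate of $f$ is a block function $g$ summing $f$ over one return region; the hypothesis $M_{U,V}f \le M$ on $B$ is what should force $g$ to be integrable, just as $|g| \le M\phi$ did in dimension one. Granting this, I would apply the Dunford--Zygmund theorem --- the ergodic counterpart of the Jessen--Marcinkiewicz--Zygmund theorem, under which unrestricted rectangular averages of an $L(\log^+ L)$ function converge a.e. --- to the induced data, and then transfer the resulting convergence back to the original averages by writing the rectangular sum over $[0,m)\times[0,n)$ as a sum over complete return regions plus a boundary remainder.

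The main obstacle is precisely this boundary remainder. In the one-parameter proof the error term $f_{l_n}(T_B^{k_n}x)$ is a single short tail, and \eqref{e4} shows $l_n/k_n \to 0$; in two dimensions the rectangle $[0,m)\times[0,n)$ cuts across a full one-dimensional boundary layer of return regions, and there is no reason a priori that the contribution of this layer, relative to $mn$, should vanish. Controlling it seems to require genuinely two-dimensional information --- not merely a Kac-type normalization of return times but geometric regularity of the return tiling --- and it is intertwined with the integrability question, since the sharp failure of a.e. convergence for merely $L^1$ data (the Kakeya-type phenomenon recalled in the introduction) shows that landing $g$ in $L^1$ alone cannot suffice; one must extract from $M_{U,V}f \le M$ the stronger, $L\log L$-flavored local control that rectangular averaging demands. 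Establishing that finiteness of $M_{U,V}f$ supplies exactly this control, and that the boundary layer is asymptotically negligible, is the crux that keeps the statement a conjecture.
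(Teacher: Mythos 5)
The paper contains no proof of this statement: it appears in Section 4 (``Future Directions''), is introduced with the words ``we make the following conjecture,'' and the section closes by describing all of these problems as subjects of ongoing research. So there is no argument of the authors to compare yours against, and you have correctly sensed this --- what you have written is a program, not a proof, and your final sentence concedes it. Your diagnosis of where the one-parameter argument of Lemma \ref{l1} breaks down is accurate: the single short tail $f_{l_n}(T_B^{k_n}x)$, killed by \eqref{e4}, becomes in two parameters a full codimension-one layer of partially covered return cells, and the passage from $g \in L^1$ to a.e.\ convergence of unrestricted rectangular averages genuinely requires $L\log^+ L$-type control (Dunford--Zygmund), which the pointwise bound $M_{U,V}f \le M$ on $B$ is not known to supply.

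Two further gaps are worth naming explicitly, since your sketch passes over them. First, the conjecture as stated does not assume that $U$ and $V$ commute; reading them as generators of a free $\mathbb{Z}^2$-action is an added hypothesis, and without it there is neither an ergodic decomposition for a $\mathbb{Z}^2$-action nor any orbit tiling to speak of. Second, even for a free ergodic $\mathbb{Z}^2$-action there is no canonical analogue of the Kakutani induced transformation or of Kac's identity $\int_B \phi \, d\mu = 1$: the ``return regions'' you posit are not canonically determined by $B$, and the multiparameter Rokhlin-type tower theorems produce tilings only up to an exceptional set of small but positive measure, which feeds back into exactly the boundary problem you flag. So the proposal is a reasonable opening move, but each of its three main steps (inducing, integrability or $L\log^+ L$ membership of $g$, negligibility of the boundary layer) is itself unresolved; this is consistent with the authors leaving the statement as a conjecture rather than a theorem.
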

Of course analogues of this conjecture exist where $(m,n)$ are allowed to only lie in a specified set $\Gamma$ in $\mathbb{Z}_+^2\;.$
\\

All of these topics are subjects of ongoing research.

\begin{bibsection}
\begin{biblist}


\bib{aaronson}{article}{
author = {Aaronson, J.},
journal = {Israel J. Math.},
pages = {182--188},
title = {An ergodic theorem with large normalizing constants},
volume = {38},
year = {1981},
review={\MR{0605376 (83f:28014)}},
}

\bib{aaronsonbook}{book}{
author={Aaronson, J.},
title={An Introduction to Infinite Ergodic Theory},
series={Mathematical Surveys and Monographs \bf{50}},
publisher={American Mathematical Society},
year={1997},
review={\MR{1450400 (99d:28025)}},
}

\bib{besicovitch}{article}{
author = {Besicovitch, A. S.},
journal = {Fund. Math.},
pages = {209--216},
title = {On differentiation of Lebesgue double integrals},
volume = {25},
year = {1935},
}

\bib{falconer}{book}{
author={Falconer, K. J.},
title={The Geometry of Fractal Sets},
series={Cambridge Tracts Math. \bf{85}}
publisher={Cambridge University Press},
year = {1986},
review={\MR{0867284 (88d:28001)}},
}

\bib{Gu}{book}{
   author={de Guzm{\'a}n, M.},
   title={Differentiation of Integrals in ${\mathbb{R}}^{n}$},
   publisher={Springer},
   place={Berlin},
   year={1975},
   series={Lecture Notes in Math. \bf{481}},
   review={\MR{0457661 (56 \#15866)}},
}


\bib{hpnyj2017}{article}{
author={Hagelstein, P.},
author={Parissis, I.},
title={H{\"o}lder continuity of Tauberian constants associated with discrete and ergodic strong maximal operators},
journal={New York J. Math.},
volume={23},
year={2017},
pages={1219--1236},
review={\MR{3711277}},
}

\bib{hpfund2018}{article}{
author={Hagelstein, P.},
author={Parissis, I.},
title={Tauberian constants associated to centered translation invariant density bases},
journal={Fund. Math.},
volume={243},
year={2018},
pages={169--177},
review={\MR{3846848}},
}

\bib{hs2012}{article}{
author={Hagelstein, P.},
author={Stokolos, A.},
title={Transference of weak type bounds of multiparameter ergodic and geometric maximal operators},
journal={Fund. Math.},
year={2012},
volume={218},
pages={269--284},
review={\MR{2982778}},
}

\bib{JMZ}{article}{
author={Jessen, B.},
author={Marcinkiewicz, J.},
author={Zygmund, A.},
title={Note on the differentiability of multiple integrals},
journal={Fund. Math.},
volume={25},
pages={217--234},
year={1935},
}

\bib{kac1947}{article}{
author={Kac, M.},
title={On the notion of recurrence in discrete stochastic processes},
journal={Bull. Amer. Math. Soc.},
volume={53},
pages={1002--1010},
year={1947},
review={\MR{0022323}},
}

\bib{kakutani1943}{article}{
author={Kakutani, S.},
title={Induced measure preserving transformations},
journal={Proc. Imp. Acad. Tokyo},
volume={19},
pages={635--641},
year={1943},
review={\MR{0014222}},
}

\bib{petersen}{book}{
    AUTHOR = {Petersen, K.},
     TITLE = {Ergodic Theory},
    SERIES = {Cambridge Stud. Adv. Math. \bf{2}},
 PUBLISHER = {Cambridge University Press},
      YEAR = {1983},
      review={\MR{0833286 (87i:28002)}},
}

\bib{Stein}{book}{
   author={E. M. Stein},
   title={Singular Integrals and Differentiability Properties of Functions},
   series={Princeton Math. Series \bf{30}},
   publisher={Princeton University Press},
   year={1970},
   pages={xiv+290},
   review={\MR{0290095 (44 \#7280)}},
}

\bib{wiener1939}{article}{
author={Wiener, N.},
title={The ergodic theorem},
journal={Duke Math. J.},
volume={5},
year={1939},
pages={1--18},
review={\MR{1546100}},
}
\end{biblist}
\end{bibsection}

\end{document}